\documentclass{article}

\usepackage{a4wide}
\usepackage{amsfonts}
\usepackage{amsmath}
\usepackage{amsthm}
\usepackage[english]{babel}
\usepackage{bbm}

\newcommand{\N}{\mathbb N}

\newcommand{\R}{\mathbb R}

\newcommand{\half}{\mbox{$\frac 1 2$}}

\newtheorem{theorem}{Theorem}[section]
\newtheorem{lemma}[theorem]{Lemma}
\newtheorem{proposition}[theorem]{Proposition}

\newtheorem{corollary}[theorem]{Corollary}

\theoremstyle{definition}
\newtheorem{definition}[theorem]{Definition}
\newtheorem{hypothesis}[theorem]{Hypothesis}

\theoremstyle{remark}
\newtheorem{remark}[theorem]{Remark}

\newcommand{\D}{\mathfrak D}

% \usepackage{a4wide}
% \usepackage[english]{babel}
% 
% \usepackage{bbm}
% 
% \usepackage{amsmath}
% \usepackage{amsfonts}
% \usepackage{amsthm}
% \usepackage{amssymb}
% 
% % \usepackage[T1]{fontenc}
% 
% \newcommand{\N}{\mathbb N}
% \newcommand{\R}{\mathbb R}
% \newcommand{\field}{\mathbb F}
% \newcommand{\E}{\mathbb E}
% \renewcommand{\P}{\mathbb P}
% \newcommand{\Q}{\mathbb Q}
% \newcommand{\half}{\mbox{$\frac 1 2$}}
% \newcommand{\quarter}{\mbox{$\frac 1 4$}}
% \newcommand{\1}{\mathbbm{1}}
% \newcommand{\e}{\mathrm e}
% \newcommand{\bl}{\mathcal L}
% \newcommand{\simple}{\mathcal S}
% \newcommand{\rcll}{D}
% \newcommand{\tr}{\mathrm{tr}}
% %\newcommand{\domain}{\mathcal D}
% \newcommand{\im}{\mathrm {im} \ }
% \newcommand{\Res}{\mathrm {Res} \ }
% \renewcommand{\i}{\mathrm i}
% \newcommand{\Z}{\mathbb Z}
% \newcommand{\lip}{\mathrm {Lip}}
% \renewcommand{\Re}{\mathrm{Re} \ }
% \renewcommand{\Im}{\mathrm{Im} \ }
% \newcommand{\transpose}{\mathrm {T}}
% \renewcommand{\vec}{\mathrm{vec}}
% \newcommand{\mgles}{\mathcal M}
% \newcommand{\law}{\mathcal L}
% \newcommand{\gauss}{\mathcal N}
% \renewcommand{\L}{\mathfrak L}
% 
% \setlength{\parindent}{0pt}
% \setlength{\parskip}{0.5 \baselineskip}
% 
% \newtheorem{theorem}{Theorem}[section]
% \newtheorem{hypothesis}[theorem]{Hypothesis}
% \newtheorem{lemma}[theorem]{Lemma}
% \newtheorem{definition}[theorem]{Definition}
% \newtheorem{proposition}[theorem]{Proposition}
% \newtheorem{corollary}[theorem]{Corollary}
% 
% \theoremstyle{remark}
% \newtheorem{remark}[theorem]{Remark}
% \newtheorem{example}[theorem]{Example}

\title{Dissipativity of the delay semigroup}
\author{Joris Bierkens\footnote{Joris Bierkens, Donders Institute for Brain, Cognition and Behaviour, Radboud University Nijmegen, j.bierkens@science.ru.nl} \quad Onno van Gaans\footnote{Onno van Gaans, Mathematical Institute, Universiteit Leiden, vangaans@math.leidenuniv.nl}}
\date{}

\begin{document}

\maketitle

AMS Subject Classification (2010):  34K06, 47D06, 47B44
% 34K06: linear functional differential equations
% 47D06: one-parameter semigroups

Keywords: delay differential equation, (generalized) contraction semigroup, (stochastic) evolution equation, dissipative operators, accretive operators

\begin{abstract}
Under mild conditions a delay semigroup can be transformed into a (generalized) contraction semigroup by modifying the inner product on the (Hilbert) state space into an equivalent inner product. Applications to stability of differential equations with delay and stochastic differential equations with delay are given as examples.
\end{abstract}

\section{Introduction}
Consider a linear functional evolution equation of the form
\begin{equation} \label{eq:linearfunctional} \left\{ \begin{array}{ll} \frac{d}{dt} u(t) = B u(t) + \int_{-r}^0 d \zeta(\sigma) \ u(t+\sigma), \quad & t \geq 0, \\ u(0) = x \\ u(\sigma) = f(\sigma), \quad -1 \leq \sigma \leq 0 \end{array} \right. \end{equation}
in a Hilbert space $X$ where $\zeta : [-1,0] \rightarrow L(X)$ of bounded variation, with initial condition $x \in X$ and $f \in L^2([-1,0];X)$.
Under conditions on $B$ and $\zeta$ there exists a unique solution to~\eqref{eq:linearfunctional}, see \cite{batkai}.
If we let $\mathcal E^2 = X \times L^2([-1,0];X)$ denote the state space of this functional evolution equation then we may consider the solution semigroup of this linear evolution:
\begin{equation} \label{eq:delaysemigroup} T(t) \begin{pmatrix} x \\ f \end{pmatrix} = \begin{pmatrix} u(t) \\ u(t + \cdot) \end{pmatrix}.\end{equation}
We call $(T(t))_{t \geq 0}$ an abstract delay semigroup. This notion is introduced in more detail in Section~\ref{sec:delaysemigroup}.

The semigroup thus obtained is rarely a contraction semigroup. In this paper we will show that under very mild conditions, we may change the inner product on $\mathcal E^2$ into an equivalent inner product, such that the delay semigroup becomes a (generalized) contraction semigroup. A \emph{contraction semigroup} is a semigroup $(S(t))_{t \geq 0}$ such that $||S(t)|| \leq 1$ for all $t \geq 0$. A \emph{generalized contraction semigroup} is a semigroup $(S(t))_{t \geq 0}$ such that $||S(t)|| \leq e^{\lambda t}$ for some $\lambda \in \R$ and all $t \geq 0$.

As explained in Section~\ref{sec:stability_delay_eq} our results improve upon \cite{webb1976}, where dissipativity of functional differential equations is established under stronger conditions, and upon \cite[Section 10.3]{dapratozabczyk1996}, where the results are restricted to linear delay equations of the form
\[ \frac{d}{dt} x(t) = B x(t) + \sum_{i=1}^k B_i x(t-h_i),\]
and where the conditions are hard to check in practice.

In Section~\ref{sec:delaysemigroup} we briefly describe the construction of the abstract delay semigroup, mainly to introduce notation. Then our main results are stated in Section~\ref{sec:dissipativity}. We sketch some possible applications, mainly in the field of stochastic evolution equations, in Section~\ref{sec:applications}.

\section{Abstract delay differential equations}
\label{sec:delaysemigroup}
We present here the abstract framework for the study of deterministic delay differential equations, or \emph{delay equation}, of~\cite{batkai}. Deterministic delay equations may also be studied in spaces of continuous functions, see~\cite{diekmann1995} and \cite[Section VI.6]{engelnagel}, but this setting is not discussed here.

Let $(X,|\cdot|)$ be a Banach space, and for $1 \leq p < \infty$ let $W^{1,p}([-1,0];X)$ denote the \emph{Sobolev space} consisting of equivalence classes of functions in $L^p([-1,0];X)$ which have a weak derivative in $L^p([-1,0];X)$ (see \cite{taylor1996}, Chapter 4).

Consider the abstract differential equation with delay
\begin{equation}\label{eq:abstractdelayeq} \left\{ \begin{array}{l}\frac{d}{dt} u(t) = B u(t) + \Phi u_t, \quad t > 0, \\
 u(0) = x, \\
 u_0 = f, \end{array} \right.
\end{equation}
under the following assumptions:

\begin{hypothesis}
\label{hyp:defabstractdelay}
\begin{itemize}
\item[(i)] $x \in X$;
\item[(ii)] $B$ is the generator of a strongly continuous semigroup $(S(t))_{t \geq 0}$ in $X$;
\item[(iii)] $f \in L^p([-1,0];X)$, $1 \leq p < \infty$;
\item[(iv)] $\Phi : W^{1,p}([-1,0];X) \rightarrow X$ is a bounded linear operator, allowing the expression 
\[ \Phi(f) := \int_{-1}^0 d \zeta(s) \ f(s),\] 
for $f \in C([-1,0];X)$, where $\zeta : [-1,0] \rightarrow L(X)$ is of bounded variation, and where the integral is a Lebesgue-Stieltjes integral.
\item[(v)] $u: [-1,\infty) \rightarrow X$ and for $t \geq 0$, $u_t : [-1,0] \rightarrow X$ is defined by $u_t(\sigma) = u(t+\sigma)$, $\sigma \in [-1,0]$.
\end{itemize}
\end{hypothesis}
In general, if $(\xi(t))_{t \in [-1,\infty)}$ is a process, then the process $(\xi_t)_{t \geq 0}$ with values in a function space, defined by $\xi_t(\sigma) := \xi(t+\sigma)$, $t \geq 0$, $\sigma \in [-1,0]$, is called the \emph{segment process} of $\xi$. So here $(u_t)_{t \geq 0}$ is the segment process of $(u(t))_{t \in [-1,\infty)}$. It keeps track of the history of $(u(t))_{t \in [-1,\infty)}$.

\begin{definition}
A \emph{classical solution}\index{classical solution of abstract delay differential equation} of~(\ref{eq:abstractdelayeq}) is a function $u: [-1,\infty) \rightarrow X$ that satisfies
\begin{itemize}
\item[(i)] $u \in C([-1,\infty);X) \cap C^1([0,\infty);X)$;
\item[(ii)] $u(t) \in \D(B)$ and $u_t \in W^{1,p}([-1,0];X)$ for all $t \geq 0$;
\item[(iii)] $u$ satisfies~(\ref{eq:abstractdelayeq}).
\end{itemize}
\end{definition}

To employ a semigroup approach we introduce the Banach space 
\[ \mathcal E^p := X \times L^p([-1,0];X),\]
and the closed, densely defined operator in $\mathcal E^p$,
\begin{equation} \label{eq:generatorabstractdelaysemigroup} A := \begin{bmatrix} B & \Phi \\ 0 & \frac{d}{d \sigma} \end{bmatrix}, \quad \D(A) = \left\{ \begin{pmatrix} x \\ f \end{pmatrix} \in \D(B) \times W^{1,p}([-1,0];X) : f(0) = x \right\}.\end{equation}

The equation~(\ref{eq:abstractdelayeq}) is called \emph{well-posed}\index{well-posed delay differential equation} if for all $(x,f) \in \D(A)$, there exists a unique classical solution of~(\ref{eq:abstractdelayeq}) that depends continuously on the initial data (in the sense of uniform convergence on compact intervals).

If Hypothesis~\ref{hyp:defabstractdelay} is satisfied, then $A$ generates a strongly continuous semigroup \cite[Theorem 3.26, 3.29]{batkai}. By~\cite[Corollary 3.7]{batkai} this is equivalent to saying that~\eqref{eq:abstractdelayeq} is well-posed. In this case the semigroup generated by $A$ is called an \emph{(abstract) delay semigroup}.

\section{Dissipativity of the delay semigroup}
\label{sec:dissipativity}

We continue to use the notation of the previous section, under the assumptions of Hypothesis~\ref{hyp:defabstractdelay}; however, we will specialize to the situation in which $(X, \langle \cdot, \cdot \rangle)$ is a Hilbert space and $p=2$.
First recall the notions of dissipativity and contraction semigroup on Hilbert spaces.

\begin{definition}
Suppose $A : \D(A) \rightarrow H$ is a linear operator with $\D(A) \subset H$, and where $(H, \langle \cdot,\cdot\rangle)$ is a Hilbert space. Then $A$ is said to be \emph{dissipative} if $\langle A x, x \rangle \leq 0$ for all $x \in \D(A)$.
A strongly continuous semigroup $(T(t))_{t \geq 0}$ on a Banach space $Y$ is said to be a \emph{contraction semigroup} if $||T(t))|| \leq 1$ for all $t \geq 0$.
\end{definition}

It is well-known that a strongly continuous semigroup is a contraction semigroup if and only if the infinitesemal generator of the semigroup is dissipative \cite[Section II.3.a]{engelnagel}. In the following, if $\zeta : [-1,0] \rightarrow L(X)$ is of bounded variation, let $|\zeta| : [-1,0] \rightarrow \R$ denote the total variation process, 
\[ |\zeta|(t) = \sup \sum_{(t_i,t_{i+1}) \in \pi } \left|||\zeta||(t_{i+1}) - ||\zeta||(t_i)\right|,\]
where the supremum ranges over all partitions $\pi = \{ t_0, \dots, t_n\}$ of the interval $[0,t]$ of the form $0 = t_0 < t_1 < \dots < t_n = t$.

\begin{theorem}
\label{thm:dissipativityboundedvariation}
Assume Hypothesis~\ref{hyp:defabstractdelay} holds.
Suppose furthermore that $B - \lambda I$ is dissipative.
If 
\begin{equation} \label{eq:conditiondissipativity} (\mu - \lambda)^2 > (|\zeta|(0) - |\zeta|(-1))\int_{-1}^0 e^{2 \mu r} \ d |\zeta|(r)\end{equation}
for some $\mu > \lambda$, 
then there exists an equivalent inner product on $X \times L^2([-1,0];X)$ (with respect to the canonical inner product on $X \times L^2([-1,0];X)$) such that $A - \mu I$ is dissipative with respect to this inner product. This inner product is given by
\begin{equation} \label{eq:inner_product} \left(\begin{pmatrix} c \\ f \end{pmatrix}, \begin{pmatrix} d \\ g \end{pmatrix} \right) := \langle c, d \rangle + \int_{-1}^0 \tau(s) \langle f(s), g(s) \rangle \ d s, \quad c, d \in X, f, g \in L^2([-1,0];X),\end{equation}
where $\tau : [-1,0] \rightarrow \R$ is given by 
\begin{equation}
\label{eq:tau}
\tau(t) = e^{-2 \mu t} \left[ \mu - \lambda - \frac{|\zeta|(0) - |\zeta|(-1)}{\mu - \lambda} \int_t^0 e^{2 \mu r} \ d |\zeta|(r) \right].
\end{equation}
\end{theorem}

In the proof we will make use of the following two lemmas.

\begin{lemma}
\label{lem:tau_positive}
Let $\tau: [-1,0] \rightarrow \R$ be defined by~\eqref{eq:tau} and suppose that $\mu > \lambda$ and ~\eqref{eq:conditiondissipativity} holds. Then 
\[  0 < \inf_{t \in [-1,0]} \tau(t) \leq \sup_{t \in [-1,0]} \tau(t) < \infty.\]
\end{lemma}

\begin{proof}
Clearly $\tau$ is bounded from above. The lower bound follows by noting that $\inf_{t \in [-1,0]} e^{-2 \mu t} > 0$ and, under condition~\eqref{eq:conditiondissipativity},
\begin{align*}
\sup_{t \in [-1,0]}\frac{|\zeta|(0) - |\zeta|(-1)}{\mu - \lambda} \int_t^0 e^{2 \mu r} \ d |\zeta|(r) & \leq  \frac{|\zeta|(0) - |\zeta|(-1)}{\mu - \lambda} \int_{-1}^0 e^{2 \mu r} \ d |\zeta|(r) < \mu - \lambda.
\end{align*}

\end{proof}

\begin{lemma}
\label{lem:integration_by_parts}
Let $\tau : [-1,0] \rightarrow \R$ be defined by~\eqref{eq:tau}. Then for $f \in W^{1,2}([-1,0];\R)$ we have
\begin{equation}
\int_{-1}^0 \tau(s) \left( \half \frac{d}{ds} f(s) - \mu f(s) \right) \ d s = \half (\mu - \lambda) f(0) - \half \tau(-1) f(-1) -  \frac{ |\zeta|(0) - |\zeta|(-1)}{2 (\mu - \lambda)} \int_{-1}^0 f(s) \ d |\zeta|(s).
\end{equation}

\end{lemma}
\begin{proof}
Let $\tau_1(t) = e^{-2 \mu t}$ and $\tau_2(t) = e^{-2 \mu t} \int_t^0 e^{2 \mu r} d |\zeta|(r)$, $c_1 = \mu - \lambda$ and $c_2 = -\frac{ |\zeta|(0) - |\zeta|(-1)}{\mu - \lambda}$, so that 
\[ \tau(t) = c_1 \tau_1(t) + c_2 \tau_2(t).\]
Using integration by parts we compute
\begin{align*}
 \int_{-1}^0 \tau_1(s) \frac{d}{ds} f(s) \ d s & = \tau_1(0) f(0) - \tau_1(-1) f(-1) - \int_{-1}^0 f(s) \ d \tau_1(s) \\
 & = f(0) - \tau_1(-1) f(-1) + 2 \mu \int_{-1}^0 f(s) e^{-2 \mu s} \ d s,
\end{align*}
and
\begin{align*}
&  \int_{-1}^0 \tau_2(s) \frac{d}{ds} f(s) \ d s  \\
& = \tau_2(0) f(0) - \tau_2(-1) f(-1) - \int_{-1}^0 f(s) \ d \tau_2(s) \\
 & = - \tau_2(-1) f(-1) + 2 \mu  \int_{-1}^0 \left\{ f(s) e^{-2 \mu s} \int_s^0 e^{2 \mu r} \ d |\zeta|(r) \right\} \ d s + \int_{-1}^0 f(s) \ d |\zeta|(s) \\
 & = - \tau_2(-1) f(-1) + 2 \mu  \int_{-1}^0  f(s) \tau_2(s) \ d s 
 + \int_{-1}^0 f(s) \ d |\zeta|(s) 
\end{align*}
The claimed result follows by combining these expressions.
\end{proof}

\begin{proof}[Proof of Theorem~\ref{thm:dissipativityboundedvariation}]
Let $\left( \cdot, \cdot \right)$ denote the symmetric bilinear form on $X \times L^2([-1,0];X)$ defined by~\eqref{eq:inner_product}. By the lower bound for $\tau$ in Lemma~\ref{lem:tau_positive}, it is in fact a coercive bilinear form so that it defines an inner product on $X \times L^2([-1,0];X)$. Also, using the finiteness of the upper bound as stated in Lemma~\ref{lem:tau_positive}, 
\[ \left( \begin{pmatrix} c \\ f \end{pmatrix},  \begin{pmatrix} c \\ f \end{pmatrix} \right) = |c|^2 + \int_{-1}^0 \tau(s) |f(s)|^2 \ d s \leq \left( 1 \vee \sup_{t \in [-1,0]} \tau(t) \right) \left( |c|^2 + \int_{-1}^0 |f(s)|^2 \ d s \right),\]
so that the inner product $(\cdot,\cdot)$ is equivalent to the canonical inner product on $X \times L^2([-1,0];X)$.

We have for $x \in \D(A)$,
\begin{align*}
( (A - \mu I) x, x )& = \langle B x(0),x(0)\rangle  + \left\langle \int_{-1}^0 d \zeta(s) x(s), x(0) \right\rangle \\
& \quad + \int_{-1}^0 \tau(s) \langle \dot x(s), x(s) \rangle \ d s - \mu |x(0)|^2 - \mu \int_{-1}^0 \tau(s) |x(s)|^2 \ d s \\
& \leq (\lambda-\mu) |x(0)|^2 + \int_{-1}^0 |x(0)| |x(s)| \ d |\zeta|(s) + \int_{-1}^0 \tau(s) \left( \half \frac{d}{ds } |x(s)|^2 - \mu |x(s)|^2 \right) \ d s
\end{align*}
% & = (\lambda-\mu) |x(0)|^2 + \int_{-1}^0 |x(0)| |x(s)| \ d |\zeta|(s)- \int_{-1}^0 \half \dot \tau(s) |x(s)|^2 \ d s \\
% & \quad + \half \left\{\tau(0) |x(0)|^2 - \tau(-1)|x(-1)|^2\right\} - \int_{-1}^0 \mu \tau(s) |x(s)|^2 \ d s \\
% & \leq (\lambda -\mu + \half \tau(0)) |x(0)|^2 + \int_{-1}^0 |x(0)| |x(s)| \ d |\zeta|(s) + \int_{-1}^0 \left( -\mu \tau(s) - \half \dot \tau(s) \right) |x(s)|^2 \ d s,
% \end{align*}
% where we denote $\gamma := \tau(0)$. Since $\dot \tau$ may not be defined, the last steps have to be interpreted formally.

By Lemma~\ref{lem:integration_by_parts}, applied to $f(\cdot) = |x(\cdot)|^2$, we therefore have
\begin{align*}
 ( (A - \mu I) x, x ) & \leq  (\lambda-\mu) |x(0)|^2 + \int_{-1}^0 |x(0)| |x(s)| \ d |\zeta|(s) \\
 & \quad \quad + \half (\mu - \lambda) |x(0)|^2 - \half \tau(-1) |x(-1)|^2 -  \frac{ |\zeta|(0) - |\zeta|(-1)}{2 (\mu - \lambda)} \int_{-1}^0 |x(s)|^2 \ d |\zeta|(s) \\
 & \leq - \half (\mu - \lambda) |x(0)|^2 + \int_{-1}^0 |x(0)| |x(s)| \ d |\zeta|(s) -  \frac{ |\zeta|(0) - |\zeta|(-1)}{2 (\mu - \lambda)} \int_{-1}^0 |x(s)|^2 \ d |\zeta|(s) \\
 & = |x(0)|^2  \int_{-1}^0 \left\{ - \frac{\mu - \lambda}{2 (|\zeta|(0) -|\zeta|(-1))} + \frac{|x(s)|}{|x(0)|} - \frac{|\zeta|(0) - |\zeta|(-1)}{2 (\mu - \lambda)} \frac{|x(s)|^2}{|x(0)|^2}\right\} \ d |\zeta|(s),
\end{align*}
where we used that $\tau(-1) > 0$ in the second inequality, and assumed $|\zeta|(0) > |\zeta|(-1)$. (If $|\zeta|(0) = |\zeta|(-1)$, then $\zeta$ is constant, and the expression after the second inequality above is then less than or equal to zero, which we want to show.)
Note that the integrand is a second order polynomial in $\frac{|x(s)|}{|x(0)|}$, of the form
\[ p(\xi) = - \frac 1 {2 \alpha} + \xi - \half \alpha \xi^2 = -\half \alpha \left( \xi - \frac 1 {\alpha} \right)^2 \leq 0, \]
since $\alpha = \frac{|\zeta|(0) - |\zeta|(-1)}{\mu - \lambda} \geq 0$. As a result, it follows that $((A - \mu I)x ,x ) \leq 0$ for all $x \in \D(A)$.

\end{proof}

It is in general not possible to verify condition~\eqref{eq:conditiondissipativity} explicitly. However, we may deduce the following simpler conditions.

\begin{corollary}
\label{cor:dissipativitydelay}
Assume Hypothesis~\ref{hyp:defabstractdelay} holds.
Suppose furthermore that $B - \lambda I$ is dissipative.
\begin{itemize}
\item[(i)]
If 
\begin{equation} \label{eq:conditionmu} \mu > \lambda + \left( (|\zeta|(0) - |\zeta|(-1))\int_{-1}^0 e^{2 \lambda r} \ d |\zeta|(r) \right)^{1/2}\end{equation}
for some $\mu \in \R$, 
then there exists an equivalent inner product on $\mathcal E^2$ such that $A - \mu I$ is dissipative.
\item[(ii)]
If 
\[ \lambda < -\int_{-1}^0 d |\zeta|,\]
then there exists an equivalent inner product on $\mathcal E^2$ such that
$A$ is dissipative.
\end{itemize}
\end{corollary}

\begin{proof}
The first statement follows from $e^{2 \mu r} < e^{2 \lambda r}$ for $\lambda < \mu$ and all $r \leq 0$, and rewriting~\eqref{eq:conditiondissipativity}. The second statement follows by noting that $\lambda ^2 > (|\zeta|(0) - |\zeta|(-1))^2$ so that~\eqref{eq:conditiondissipativity} is satisfied with $\mu = 0$.
\end{proof}
%For such a $\nu$, let 
%\[\widetilde A := A + \begin{bmatrix} \nu I & 0 \\ 0 & 0 \end{bmatrix} = \begin{bmatrix} B + \nu I & \Phi \\ 0 & \frac{d}{d \sigma} \end{bmatrix} =: \begin{bmatrix} \widetilde B & \Phi \\ 0 & \frac{d}{d \sigma} \end{bmatrix}.\]

To state an important consequence, we recall the notion of generalized contraction.
\begin{definition}
A strongly continuous semigroup $(T(t))_{t \geq 0}$ on a Banach space $(Y, ||\cdot||)$ is called a \emph{generalized contraction} if there exists a constant $\mu \in \R$ such that $||T(t)||_{t \geq 0} \leq e^{\mu t}$.
\end{definition}

\begin{theorem}
\label{thm:generalizedcontractiongivesgeneralizedcontraction}
Suppose Hypothesis~\ref{hyp:defabstractdelay} holds. Suppose furthermore that $B$ is the generator of a generalized contraction semigroup. 
Then there exists an equivalent inner product on $\mathcal E^2$ such that $A$ is the generator of a generalized contraction semigroup.
\end{theorem}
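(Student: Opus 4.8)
The plan is to reduce the statement to the dissipativity results already in hand, namely Theorem~\ref{thm:dissipativityboundedvariation} and Corollary~\ref{cor:dissipativitydelay}. As in the rest of this section I would work with $\Phi$ of the bounded-variation form $\Phi f=\int_{-1}^0 d\zeta(s)\,f(s)$, $\zeta:[-1,0]\to L(X)$ of bounded variation, which is precisely the setting in which those results apply; and since here $Z=X$, Example~\ref{ex:delaysemigroup} together with Theorem~\ref{thm:wellposednessabstractdelay} already guarantees that $A$ generates a strongly continuous semigroup $(T(t))_{t\ge0}$ on $\mathcal E^2$, so there is a delay semigroup to speak of.

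First I would extract a dissipativity constant for $B$. By hypothesis $\|S(t)\|\le\e^{\lambda_0 t}$ for some $\lambda_0\in\reals$ and all $t\ge0$, hence $\|S(t)x\|^2\le\e^{2\lambda_0 t}\|x\|^2$ for all $x\in X$, with equality at $t=0$. For $x\in\D(B)$ the map $t\mapsto\|S(t)x\|^2$ is right-differentiable at $0$ with derivative $2\,\Re\langle Bx,x\rangle$ (using $\frac{d}{dt}S(t)x=BS(t)x$ and that $X$ is a Hilbert space), so comparing right-derivatives at $0$ gives $\Re\langle Bx,x\rangle\le\lambda_0\|x\|^2$; that is, $B-\lambda_0 I$ is dissipative. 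Set $\lambda:=\lambda_0$.

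Next I would produce an admissible shift $\mu$ and the new inner product. The quantity $(|\zeta|(0)-|\zeta|(-1))\int_{-1}^0\e^{2\lambda r}\,d|\zeta|(r)$ appearing in~\eqref{eq:conditionmu} is a fixed finite number, since $r\mapsto\e^{2\lambda r}$ is bounded on $[-1,0]$ and $|\zeta|$ is a finite measure; consequently one may simply choose $\mu\in\reals$ large enough that~\eqref{eq:conditionmu} holds. By Corollary~\ref{cor:dissipativitydelay}(i) — whose proof routes through~\eqref{eq:conditiondissipativity} and Theorem~\ref{thm:dissipativityboundedvariation} — there is then an inner product $(\cdot,\cdot)$ on $\mathcal E^2$, equivalent to the original one, with respect to which $A-\mu I$ is dissipative. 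Finally I would translate this into a semigroup bound: since $(\cdot,\cdot)$ is equivalent to the original inner product, $(\mathcal E^2,(\cdot,\cdot))$ is still a Hilbert space with the same topology, so $\D(A)$ is still dense, $(T(t))_{t\ge0}$ is still strongly continuous with generator $A$, and $(\e^{-\mu t}T(t))_{t\ge0}$ is the strongly continuous semigroup generated by $A-\mu I$. Applying the standard fact (recalled before Theorem~\ref{thm:dissipativityboundedvariation}) that a strongly continuous semigroup on a Hilbert space with dissipative generator is a contraction semigroup — in the Hilbert space $(\mathcal E^2,(\cdot,\cdot))$ — yields $\|\e^{-\mu t}T(t)\|_{(\cdot,\cdot)}\le1$, i.e. $\|T(t)\|_{(\cdot,\cdot)}\le\e^{\mu t}$ for all $t\ge0$, which is exactly the asserted generalized contraction property with respect to an equivalent inner product.

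I do not expect a serious obstacle here: the whole argument is carried by Theorem~\ref{thm:dissipativityboundedvariation} and Corollary~\ref{cor:dissipativitydelay}. The only points requiring a little care are the first step — deducing dissipativity of $B-\lambda I$ from the growth bound, which genuinely uses that $X$ is Hilbert and the differentiability of $t\mapsto\|S(t)x\|^2$ at $0$ for $x\in\D(B)$ — and the bookkeeping in the last step, namely that passing to an equivalent inner product alters neither the identity of the generator nor the validity of the dissipativity/contraction correspondence, which holds because the two norms are comparable.
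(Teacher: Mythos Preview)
Your argument is correct and rests on the same machinery as the paper's proof: both reduce the claim to Corollary~\ref{cor:dissipativitydelay} (and hence to Theorem~\ref{thm:dissipativityboundedvariation}), after first extracting dissipativity of $B-\lambda I$ from the growth bound $\|S(t)\|\le\e^{\lambda t}$. The tactical difference is that you invoke part~(i) of Corollary~\ref{cor:dissipativitydelay} directly, simply taking $\mu$ large enough to satisfy~\eqref{eq:conditionmu}, whereas the paper takes a short detour: it picks $\nu>\max\bigl(0,\lambda+\int_{-1}^0 d|\zeta|\bigr)$, sets $\widetilde B:=B-\nu I$ and $\widetilde\lambda:=\lambda-\nu$, and applies part~(ii) to the delay generator $\widetilde A$ built from $\widetilde B$. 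This yields an equivalent inner product in which $\widetilde A$ is dissipative, but since $\widetilde A\neq A-\nu I$ (the latter also shifts the $\tfrac{d}{d\sigma}$ block), the paper then needs the additional observation
\[
((A-\nu I)x,x)=(\widetilde A x,x)-\nu\int_{-1}^0\tau(\sigma)\,|x(\sigma)|^2\,d\sigma\le(\widetilde A x,x)\le 0,
\]
which uses $\nu>0$. Your route avoids this extra step and is cleaner; the paper's version has the mild advantage of giving the explicit constant $\nu$ in terms of $\lambda$ and the total variation of $\zeta$, but nothing substantive is gained over your direct appeal to part~(i).
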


\begin{proof}
Denote the semigroup generated by $B$ by $(S(t))_{t \geq 0}$ and suppose that 
\[ ||S(t)|| \leq e^{\lambda t}, \quad t \geq 0.\]
Let $\nu > \max\left(0, \lambda + \int_{-1}^0 \ d |\zeta| \right)$.
Define $\widetilde B := B - \nu I$ and $\widetilde \lambda := \lambda - \nu$.
It may be verified that the conditions of Corollary~\ref{cor:dissipativitydelay} are satisfied for $\widetilde B$ and $\widetilde \lambda$, so that an equivalent inner product $(\cdot,\cdot)$ on $\mathcal E^2$ exists such that the delay semigroup generated by 
\[ \widetilde A := \begin{bmatrix} B - \nu I & \Phi \\ 0 & \frac{d}{d\sigma} \end{bmatrix} \]
is dissipative.
Now for $x \in \D(A)$
\[ ((A - \nu I) x,x) = (\widetilde A x, x) - \nu \int_{-1}^0 \tau(\sigma) x(\sigma)^2 \ d \sigma \leq (\widetilde A x, x) \leq 0. \]
\end{proof}

Note furthermore that, if $A$ is of the form~(\ref{eq:generatorabstractdelaysemigroup}) with $B - \lambda I$ dissipative for some $\lambda \in \R$, we can always perturb $A$ by a bounded operator of the form $\begin{pmatrix} - c I & 0 \\ 0 & 0 \end{pmatrix}$ to obtain the generator of a new delay semigroup. If we choose $c > 0$ large enough, by Corollary~\ref{cor:dissipativitydelay} we may change the inner product to obtain a dissipative generator.

In an entirely analogous way as for Theorem~\ref{thm:dissipativityboundedvariation} we can prove the following slightly stronger result in case $\Phi$ has a density function.

\begin{proposition}
Suppose $\Phi$ is of the form
\[ \Phi f = \int_{-1}^0 \zeta(\sigma) f(\sigma) \ d \sigma, \quad f \in L^2([-1,0];X),\]
with $\zeta \in L^2([-1,0];L(X))$.
Suppose furthermore $B - \lambda I$ is dissipative.
If there exists $\mu > \lambda$ such that
\[(\lambda - \mu)^2 >  \int_{-1}^0 e^{2 \mu \rho} | \zeta(\rho)|^2 \ d \rho.\]
then there exists an equivalent inner product on $\mathcal E_2$ such that $A -\mu I$ is dissipative with respect to this inner product.
\end{proposition}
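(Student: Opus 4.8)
The plan is to argue along the same lines as the proof of Theorem~\ref{thm:dissipativityboundedvariation}: look for a scalar weight $\tau:[-1,0]\to(0,\infty)$ that is bounded and bounded away from $0$, equip $\mathcal E^2=X\times L^2([-1,0];X)$ with the candidate inner product
\[
\left(\begin{pmatrix}c\\ f\end{pmatrix},\begin{pmatrix}d\\ g\end{pmatrix}\right):=\langle c,d\rangle+\int_{-1}^0\tau(s)\,\langle f(s),g(s)\rangle\,ds,
\]
observe that two-sided boundedness of $\tau$ automatically makes this equivalent to the original inner product, and then choose $\tau$ so that $A-\mu I$ is dissipative for it. The only point where the argument departs from the bounded-variation case is the estimate for the delay term, and that is precisely where the density $\zeta\in L^2([-1,0];L(X))$ lets one save something by an honest $L^2$-estimate, producing the quantity $\int_{-1}^0 e^{2\mu\rho}\|\zeta(\rho)\|^2\,d\rho$.

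For $x\in\D(A)$ I would first compute, using that $B-\lambda I$ is dissipative, $\langle\dot x(s),x(s)\rangle=\tfrac12\tfrac{d}{ds}|x(s)|^2$ a.e., and $|\langle\zeta(s)x(s),x(0)\rangle|\le\|\zeta(s)\|\,|x(s)|\,|x(0)|$, that
\[
((A-\mu I)x,x)\le(\lambda-\mu)|x(0)|^2+|x(0)|\int_{-1}^0\|\zeta(s)\|\,|x(s)|\,ds+\int_{-1}^0\tau(s)\left(\tfrac12\tfrac{d}{ds}|x(s)|^2-\mu|x(s)|^2\right)ds .
\]
Integrating the last integral by parts turns it into $\tfrac12\tau(0)|x(0)|^2-\tfrac12\tau(-1)|x(-1)|^2-\int_{-1}^0 b(s)|x(s)|^2\,ds$, where $b(s):=\mu\tau(s)+\tfrac12\dot\tau(s)$; if $b>0$, the pointwise Young inequality $\|\zeta(s)\|\,|x(s)|\,|x(0)|\le\tfrac{\|\zeta(s)\|^2|x(0)|^2}{4b(s)}+b(s)|x(s)|^2$ makes the $L^2$-part cancel, so (since $\tau(-1)\ge0$) dissipativity of $A-\mu I$ reduces to the single scalar inequality $\lambda-\mu+\tfrac12\tau(0)+\tfrac14\int_{-1}^0\|\zeta(s)\|^2/b(s)\,ds\le0$. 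Since the defining relation $b=\mu\tau+\tfrac12\dot\tau$ is the same as $\tfrac{d}{ds}(e^{2\mu s}\tau(s))=2e^{2\mu s}b(s)$, one may prescribe $b$ freely and then $\tau(s)=e^{-2\mu s}\left(e^{-2\mu}\tau(-1)+2\int_{-1}^s e^{2\mu r}b(r)\,dr\right)$; the choice $b(s)=\tfrac12\sqrt{N}\,e^{-2\mu s}$ with $N:=\int_{-1}^0 e^{2\mu\rho}\|\zeta(\rho)\|^2\,d\rho$ gives $\tfrac12\tau(0)+\tfrac14\int_{-1}^0\|\zeta(s)\|^2/b(s)\,ds=\sqrt N+o(1)$ as $\tau(-1)\downarrow0$, so the scalar requirement becomes exactly $\mu-\lambda>\sqrt N$, i.e.\ $(\lambda-\mu)^2>\int_{-1}^0 e^{2\mu\rho}\|\zeta(\rho)\|^2\,d\rho$. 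With $\tau(-1)=\varepsilon>0$ this $\tau$ is continuous on $[-1,0]$ (because $r\mapsto e^{2\mu r}b(r)$ is integrable), hence bounded, and bounded below by $\varepsilon\min_{[-1,0]}e^{-2\mu(s+1)}>0$, so the inner product above is genuinely equivalent to the original one. (Incidentally, optimizing $b(r)$ pointwise over $e^{2\mu r}b(r)+\tfrac14\|\zeta(r)\|^2/b(r)$ yields the even weaker sufficient condition $\mu-\lambda>\int_{-1}^0 e^{\mu r}\|\zeta(r)\|\,dr$, which is implied by the stated one via Cauchy--Schwarz on the unit-length interval $[-1,0]$.)

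The step I expect to need the most care is, as in Theorem~\ref{thm:dissipativityboundedvariation}, the integration by parts: $\dot\tau$ need not exist or be bounded a priori, so the step has to be justified by committing from the outset to the explicit, absolutely continuous $\tau$ above and differentiating $s\mapsto|x(s)|^2$, which is absolutely continuous because $x\in W^{1,2}([-1,0];X)$. One must also respect the distinction between $\tau(-1)=0$ (which gives only a bounded, not a two-sided bounded, weight) and $\tau(-1)=\varepsilon>0$ (which forces the inequality in the hypothesis to be strict). Finally it should be noted separately that a density $\Phi$ does satisfy the standing hypotheses so that $A$ generates a delay semigroup at all: $\|\Phi f\|\le\|\zeta\|_{L^2}\|f\|_{L^2}$, and~\eqref{eq:conditiondelaysemigroup} holds with $q(t)=O(t)$. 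The residual matters — the case $x(0)=0$, and the fact that $b=\tfrac12\sqrt N e^{-2\mu s}$ is strictly positive so no approximation in $\zeta$ is needed — are routine.
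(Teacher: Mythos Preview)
Your proposal is correct and follows essentially the same route the paper indicates (the paper does not give a proof, merely says the argument is ``entirely analogous'' to Theorem~\ref{thm:dissipativityboundedvariation}): weighted inner product with scalar weight $\tau$, dissipativity computation, choice of $\tau$ via the auxiliary function $b=\mu\tau+\tfrac12\dot\tau$, and optimisation to reach the condition $(\mu-\lambda)^2>\int_{-1}^0 e^{2\mu\rho}\|\zeta(\rho)\|^2\,d\rho$. Your pointwise Young inequality is exactly the discriminant condition of Theorem~\ref{thm:dissipativityboundedvariation} in disguise, and your parametrisation (prescribe $b$, integrate forward from $\tau(-1)=\varepsilon$, let $\varepsilon\downarrow0$) is equivalent to the paper's (prescribe $\gamma=\tau(0)$, integrate backward, optimise over $\gamma$); in fact your version is slightly cleaner because it avoids the division by $|x(0)|^2$ and hence the separate treatment of $x(0)=0$. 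Your side remarks---the care with integration by parts via the explicit absolutely continuous $\tau$, the strictness needed for two-sided boundedness, the verification that the density $\Phi$ satisfies Hypothesis~\ref{hyp:abstractdelay}, and the observation that pointwise optimisation of $b$ yields the even weaker condition $\mu-\lambda>\int_{-1}^0 e^{\mu r}\|\zeta(r)\|\,dr$---are all correct and appropriate.
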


\subsection{Abstract sufficient conditions for dissipativeness}
The following is an attempt at establishing conditions on more general semigroup generators $A$ such that there exists an equivalent inner product with respect to which $A$ is dissipative. The remarks in this short section play no role in the remainder of this paper. 
% \textbf{To do:} there is probably already a more developed theory available; where?
We make use of the notion of exact observability.

\begin{definition}
Let $A$ be the generator of a strongly continuous semigroup $(T(t))_{t \geq 0}$ in a Hilbert space $X$. Let $Y$ be a Hilbert space and $C \in L(X;Y)$. Then $(A,C)$ is said to be \emph{exactly observable} in time $\tau > 0$ if the mapping $x \mapsto C^{\tau} x := C T(\cdot) x : X \rightarrow L^2([0,\tau];Y)$ is injective and its inverse is bounded on the range of $C^{\tau}$.
\end{definition}

Observability is a concept dual to controllability: $(A,C)$ is exactly observable if and only if $(A, C^*)$ is exactly controllable (see \cite{curtainzwart}, Section 4.1).

\begin{proposition}
Suppose $A$ generates an asymptotically stable strongly continuous semigroup $(T(t))_{t \geq 0}$ in $X$. If there exists a Hilbert space $Y$, a bounded linear mapping $C \in L(X;Y)$ and $\tau > 0$ such that $(A,C)$ is exactly observable in time $\tau$, then there exists an equivalent inner product on $X$ such that $A$ is dissipative with respect to this inner product.
\end{proposition}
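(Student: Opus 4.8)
The plan is to build a new inner product on $H$ from the observability map and verify dissipativity via the generator criterion. Define
\[ (x, y) := \int_0^\infty \langle C T(t) x, C T(t) y \rangle_K \ dt, \quad x, y \in H. \]
The integral is finite: asymptotic stability together with the uniform boundedness principle gives $\|T(t)\| \leq M$ for all $t \geq 0$, and exact observability of $(A,C)$ in time $\tau$ means that $t \mapsto C T(t) x$ has $L^2([0,\tau];K)$-norm bounded below by $m\|x\|$ for some $m > 0$; applying this on the successive intervals $[n\tau, (n+1)\tau]$ and using $\|T(n\tau)\| \leq M$ one gets that the tail $\int_{n\tau}^\infty \|CT(t)x\|^2 dt$ is comparable to $\|T(n\tau)x\|^2$, which tends to $0$. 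So the integral converges and in fact defines a quadratic form.

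Next I would check that $(\cdot,\cdot)$ is an equivalent inner product. The lower bound $ (x,x) \geq \int_0^\tau \|CT(t)x\|^2 dt \geq m^2 \|x\|^2$ follows directly from exact observability restricted to $[0,\tau]$. For the upper bound, $(x,x) = \int_0^\infty \|CT(t)x\|^2 dt \leq \|C\|^2 \int_0^\infty \|T(t)x\|^2 dt$; here one needs $\int_0^\infty \|T(t)x\|^2 dt < \infty$ with a bound proportional to $\|x\|^2$. This is where I expect the main obstacle to lie: mere strong asymptotic stability does \emph{not} in general guarantee $T(\cdot)x \in L^2$, nor a uniform bound. One route is to observe that the functional $x \mapsto \int_0^\infty \|CT(t)x\|^2 dt$ is lower semicontinuous and finite-valued (by the tail estimate above), hence by a Baire-category / closed-graph argument it is bounded on $H$, giving $(x,x) \leq C_2 \|x\|^2$ directly without passing through $\int_0^\infty \|T(t)x\|^2 dt$. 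This is cleaner and avoids needing $L^2$-stability of the semigroup itself; the exact-observability hypothesis is essentially what converts finiteness into a uniform bound via closedness of the associated operator $C^\infty : H \to L^2([0,\infty);K)$.

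Finally, with equivalence established, dissipativity of $A$ with respect to $(\cdot,\cdot)$ is the standard Lyapunov computation. For $x \in \D(A)$,
\[ 2 \, (Ax, x) = \int_0^\infty \frac{d}{dt} \langle C T(t) x, C T(t) x \rangle_K \ dt = \lim_{t \to \infty} \|C T(t) x\|_K^2 - \|C x\|_K^2 = - \|Cx\|_K^2 \leq 0, \]
where the limit is zero because $\|CT(t)x\| \leq \|C\| \|T(t)x\| \to 0$ by asymptotic stability (and differentiation under the integral is justified since $t \mapsto T(t)x$ is $C^1$ into $H$ for $x \in \D(A)$ and $C$ is bounded, with the derivative $2\langle CT(t)Ax, CT(t)x\rangle$ integrable by Cauchy--Schwarz and the finiteness already shown). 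Hence $(\cdot,\cdot)$ is the desired equivalent inner product making $A$ dissipative. The only delicate point, as noted, is the uniform upper bound on the new norm, and I would handle it by the closed-graph argument applied to $C^\infty$ rather than by any quantitative decay rate of the semigroup.
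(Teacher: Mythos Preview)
Your approach is essentially the same as the paper's: define the new inner product via the infinite–horizon observability Gramian $Q x = \int_0^\infty T^*(t) C^* C T(t) x \, dt$, get the lower bound $Q \geq \gamma I$ from exact observability on $[0,\tau]$, and obtain dissipativity from the Lyapunov identity $2\langle QAx,x\rangle = -\|Cx\|^2$. The paper simply cites Curtain--Zwart (Corollary~4.1.14 and Theorem~4.1.23) for these three facts rather than computing them by hand.

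There is, however, a genuine gap in your argument for finiteness of $(x,x)=\int_0^\infty \|CT(t)x\|^2\,dt$. You claim the tail $\int_{n\tau}^\infty$ is ``comparable to $\|T(n\tau)x\|^2$'', but this is not what your estimates give: each block $\int_{k\tau}^{(k+1)\tau}$ is bounded above by $\|C\|^2 M^2 \tau\,\|T(k\tau)x\|^2$, and the tail is the \emph{sum} over $k\ge n$ of such blocks. Strong convergence $\|T(k\tau)x\|\to 0$ does not make that sum finite. Concretely, take $H=\ell^2$, $T(t)e_n=e^{-t/n}e_n$, $C=I$: this semigroup is strongly (but not exponentially) stable, $(A,I)$ is exactly observable on $[0,1]$, yet $\int_0^\infty\|T(t)e_n\|^2\,dt=n/2$, and for $x=(1/n)_n\in\ell^2$ the integral diverges. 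So under mere strong stability your Gramian need not be everywhere defined, and the subsequent closed-graph step (which presupposes finiteness for all $x$) never gets off the ground.

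The resolution is that ``asymptotically stable'' in the paper is meant in the sense of exponential stability; this is exactly the hypothesis of the cited Lyapunov theorem (Curtain--Zwart, Theorem~4.1.23) that yields $Q\in L(H)$. Under $\|T(t)\|\le M e^{-\omega t}$ the upper bound $(x,x)\le \|C\|^2 M^2(2\omega)^{-1}\|x\|^2$ is immediate, and your closed-graph detour is unnecessary. With that reading your proof is correct and coincides with the paper's.
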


\begin{proof}
For this $C$, by the characterization of exactly observable systems (\cite{curtainzwart}, Corollary 4.1.14) we have
\[ Q := \int_0^{\infty} T^*(t) C^* C T(t) \ d t \geq \int_0^\tau T^*(t) C^* C T(t) \ d t \geq \gamma I  \]
for some $\gamma > 0$.
Furthermore by Lyapunov theory (\cite{curtainzwart}, Theorem 4.1.23) we have that $Q \in L(X)$ and 
\[ 2 \langle QA x, x \rangle  = - |C x|^2 \leq 0, \quad x \in \D(A).\]
\end{proof}

\section{Applications}
\label{sec:applications}

In this section we provide some examples of possible applications of the dissipativity property that we established in the previous section.

\subsection{Stability of delay differential equations}
\label{sec:stability_delay_eq}
The conditions of Theorem~\ref{thm:dissipativityboundedvariation} and Corollary~\ref{cor:dissipativitydelay} give us stability results for the delay semigroup. Condition (ii) of Corollary~\ref{cor:dissipativitydelay} is `sharp' in the sense that it reproduces the stability result \cite{batkai}, Corollary 5.9.

For the linear delay equation in $\R^n$ consisting of two terms,
\[ \dot x(t) = B x(t) + C x(t-1), \]
with $B, C \in \R^{n \times n}$, and $B - \lambda I$ dissipative, Corollary~\ref{cor:dissipativitydelay} gives the results
\begin{itemize}
\item[(i)] If $\mu > \lambda + ||C|| e^{-\lambda}$ then $A - \mu I$ is dissipative under an equivalent inner product, 
\item[(ii)] If $\lambda + ||C|| < 0$ then $A$ is dissipative under an equivalent inner product.
\end{itemize}
Result (i) generalizes \cite[Remark 10.3.2]{dapratozabczyk1996} to the multidimensional case.
It may be compared to the result by G.F. Webb \cite[Proposition 4.1]{webb1976} which states that $A - \mu I$ is dissipative if $\mu \geq \max(0,\lambda+||C||)$. In case $\lambda > 0$ or $\lambda e^{\lambda} < -||C||$ our result (i) is stronger. 

Consider now for simplicity the one-dimensional case, i.e. the delay differential equation $\dot x(t) = b x(t) + c x(t-1)$ with $c > 0$. It is well known that the spectrum of $A$ consists of countably many solutions to the equation $\gamma = b + c e^{-\gamma}$ (see e.g. \cite[Theorem I.4.4]{diekmann1995}), with one real solution $\gamma_*$ which is the dominant eigenvalue. Note that $\lambda = b$ and $\gamma > b$. Let $\mu = \varepsilon + \gamma_*$. Then
\begin{align*}
 (\mu - \lambda)^2 - c^2 e^{-2 \mu} & = (\gamma_* + \varepsilon - b)^2 - c^2 e^{-2 (\gamma_* + \varepsilon)} \\
& = (\gamma_* - b)^2 -c^2 e^{-2 \gamma_*} + 2 \varepsilon(\gamma_* - b + c^2 e^{-2 \gamma_*}) + O(\varepsilon^2) \\
& = 2 \varepsilon( \gamma_* - b + (\gamma_* - b)^2 ) + O(\varepsilon^2) > 0 \quad \mbox{for sufficiently small} \ \varepsilon.
\end{align*}
Hence for this $\mu$, by Theorem~\ref{thm:dissipativityboundedvariation}, we find that $A - \mu I$ is dissipative. This shows that for $\mu$ slightly larger than the spectral bound of $A$, we may already obtain dissipativeness, so Theorem~\ref{thm:dissipativityboundedvariation} is sharp in that sense.

\subsection{The unitary dilation theorem}

An important application of dissipativity is provided by the following fundamental theorem (see \cite{davies1976}, Theorem 7.2.1), which states that we may extend a contractive semigroup on a Hilbert space to a unitary group on a larger Hilbert space.

\begin{theorem}[Szek\H{o}falvi-Nagy's theorem on unitary dilations]
Suppose $(T(t))_{t \geq 0}$ is a strongly continuous contraction semigroup on a Hilbert space $X$. Then there exists a Hilbert space $Z$ such that $Z$ is closed linear subspace of $Z$, and a strongly continuous unitary group $(U(t))_{t \geq 0}$ such that $T(t) = P U(t)$, $t \geq 0$, where $P : Z \rightarrow X$ denotes the linear projection onto $X$.
\end{theorem}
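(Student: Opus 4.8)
This is the classical one-parameter unitary dilation theorem of Sz.-Nagy. I would prove it by deducing it from the dilation theorem for operator-valued positive definite functions, spelled out via a GNS construction so that nothing has to be quoted. First I would encode the semigroup in a function on the line: put $\varphi(t) := T(t)^*$ for $t \geq 0$ and $\varphi(t) := T(-t)$ for $t < 0$. Since on a Hilbert space the adjoint of a strongly continuous contraction semigroup is again a strongly continuous contraction semigroup, $t \mapsto \varphi(t)$ is strongly continuous, $\varphi(0) = I$, $\varphi(-t) = \varphi(t)^*$ and $\|\varphi(t)\| \leq 1$ for every $t \in \reals$. The decisive property, which is where contractivity of $T$ is really used, is that $\varphi$ is \emph{positive definite}: for all $t_1, \dots, t_n \in \reals$ and $h_1, \dots, h_n \in H$,
\[ \sum_{j,k=1}^n \langle \varphi(t_j - t_k) h_k, h_j \rangle \ \geq\ 0. \]

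Granting positive definiteness, the dilation space is built by the GNS recipe. Let $V$ be the vector space of finitely supported maps $\xi : \reals \to H$, equipped with the Hermitian form $\langle \xi, \eta \rangle_V := \sum_{s,t \in \reals} \langle \varphi(t - s) \xi(s), \eta(t) \rangle_H$; positive definiteness says exactly that $\langle \xi, \xi \rangle_V \geq 0$ on $V$, so quotienting out the null space and completing gives a Hilbert space $K$. The translations $(U(t)\xi)(s) := \xi(s - t)$ preserve $\langle \cdot, \cdot \rangle_V$, because $\varphi(t-s)$ depends only on the difference, so they pass to $K$ and extend to a one-parameter unitary group $(U(t))_{t \in \reals}$. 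Writing $\1_{\{s\}} \otimes h$ for the map equal to $h$ at $s$ and $0$ elsewhere, the assignment $h \mapsto [\1_{\{0\}} \otimes h]$ is isometric since $\langle \1_{\{0\}} \otimes h, \1_{\{0\}} \otimes h \rangle_V = \langle \varphi(0) h, h \rangle = \|h\|^2$; I identify $H$ with its image in $K$ and let $P$ be the orthogonal projection of $K$ onto $H$. Two checks finish the construction. For \emph{strong continuity of} $U$: on the total set $\{[\1_{\{s\}} \otimes h]\}$ one has $\|U(t)[\1_{\{s\}} \otimes h] - [\1_{\{s\}} \otimes h]\|_K^2 = 2\|h\|^2 - 2\,\Re \langle \varphi(-t) h, h \rangle \to 0$ as $t \to 0$, and $\|U(t)\| = 1$ propagates this to all of $K$. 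For the \emph{dilation identity}: for $t \geq 0$ and $h, h' \in H$,
\[ \langle U(t) [\1_{\{0\}} \otimes h], [\1_{\{0\}} \otimes h'] \rangle_K = \langle [\1_{\{t\}} \otimes h], [\1_{\{0\}} \otimes h'] \rangle_K = \langle \varphi(-t) h, h' \rangle = \langle T(t) h, h' \rangle, \]
so that $P\, U(t)|_H = T(t)$, as required.

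It remains to establish the positive definiteness, the only step with genuine content. Translating all the $t_j$ by a common constant changes no difference $t_j - t_k$, so I may assume $0 = t_1 \leq \dots \leq t_n$. Then the Gram matrix $G := \big( \varphi(t_j - t_k) \big)_{j,k=1}^n$, viewed as an operator on $H^n$, splits as $G = I + N + N^*$, where $N$ is the strictly lower block-triangular matrix with entries $N_{jk} = T(t_j - t_k)^*$ for $j > k$. Consequently $G = (I+N)^*(I+N) - N^*N$, so for $h = (h_1, \dots, h_n) \in H^n$ one gets $\langle G h, h \rangle = \|(I+N)h\|^2 - \|Nh\|^2$. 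Now $(Nh)_1 = 0$, while for $1 \leq i \leq n-1$ the semigroup law gives $(Nh)_{i+1} = T(t_{i+1} - t_i)^* \big[ (I+N)h \big]_i$, whence $\|(Nh)_{i+1}\| \leq \|[(I+N)h]_i\|$ because $T(t_{i+1}-t_i)$ is a contraction; summing these over $i$ yields $\|Nh\|^2 \leq \|(I+N)h\|^2$, i.e. $\langle G h, h \rangle \geq 0$.

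So once the problem is cast in terms of positive definite functions there is no serious obstacle; the only substantive point is the short algebra $G = (I+N)^*(I+N) - N^*N$ together with the row-by-row contraction estimate above. The remaining work is bookkeeping: making the GNS quotient-and-completion precise and — the one place one must not be careless — verifying that $U$ is strongly continuous rather than merely a unitary group, which is exactly why strong continuity of both $T(\cdot)$ and its adjoint was recorded at the outset. A competing strategy would be to dilate the cogenerator of $(T(t))$ by the discrete Sz.-Nagy construction and then undo the Cayley transform, but that route must control the point spectrum of the dilating unitary at the point $1$ and is in fact more delicate, so I would prefer the positive-definite-function argument above.
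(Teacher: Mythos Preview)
Your argument is correct and complete: the encoding $\varphi(t)=T(t)^*$ for $t\geq 0$, $\varphi(t)=T(-t)$ for $t<0$ is positive definite by exactly the computation you give (the identity $G=(I+N)^*(I+N)-N^*N$ and the telescoping estimate $(Nh)_{i+1}=T(t_{i+1}-t_i)^*[(I+N)h]_i$ both check out line by line), and the GNS/Naimark construction then produces the strongly continuous unitary dilation as you describe.

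There is nothing to compare against, however: the paper does not prove this theorem at all. It is quoted as background (with a reference to \cite{davies1976}, Theorem~7.2.1) solely to illustrate why having a genuine contraction semigroup, rather than merely a generalized one, is valuable. Your proof is in fact the standard one and is essentially the argument given in the cited reference, so you have supplied what the paper deliberately omitted.
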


\subsection{Stochastic evolution equations}
Contraction semigroups are useful when establishing existence and uniqueness of invariant measure of stochastic evolution equations.
Consider the following stochastic evolution
\begin{equation} \label{eq:stochasticevolutioneq} \left\{ \begin{array}{ll} d X(t) = \left[ A X(t) + F(X(t)) \right] \ d t + G(X(t)) \ d W(t),\quad & t \geq 0 \\
                          X(0) = x,
                         \end{array} \right. 
\end{equation}
or in mild form
\begin{equation} X(t) = T(t) x + \int_0^t T(t-s) F(X(s)) \ d s + \int_0^t T(t-s) G(X(s)) \ d W(s),
\end{equation}
where $W$ is a cylindrical, mean-zero Wiener process with reproducing kernel Hilbert Space $U$, $A$ is the generator of a strongly continuous semigroup $(T(t))_{t \geq 0}$, $F : X \rightarrow X$ and $G : X \rightarrow L_{\mathrm{HS}}(U;X)$ Lipschitz continuous (where $L_{\mathrm{HS}}(U;X)$ denotes the Banach space of Hilbert-Schmidt operators from $U$ into $X$ (see \cite{dapratozabczyk1996} for definitions).

\subsubsection{Invariant measure}

The following general result on the existence and uniqueness of an invariant measure relies on dissipativity of the semigroup generator.

\begin{theorem}
\label{thm:invariantmeasuredissipativecase}
Suppose there exists $\omega > 0$ such that
\begin{equation} \label{eq:invmeasure_condition} 2 \langle A (x-y) + F(x) - F(y), x - y \rangle + |G(x) - G(y)|_{L_{\mathrm{HS}}(U;X)}^2 \leq - \omega |x-y|^2 \end{equation}
for all $x, y \in X$ and $n \in \N$.

Then there exists exactly one invariant measure $\mu$ for~\eqref{eq:stochasticevolutioneq},
it is strongly mixing and there exists $C > 0$ such that for any bounded Lipschitz continuous function $\varphi : X \rightarrow \R$, all $t > 0$ and $x \in X$,
\[ \left| P(t) \varphi (x) - \int_X \varphi \ d \mu \right| \leq C (1 + |x|) e^{-\omega t / 2} [\varphi]_{\mathrm{Lip}}.\]
\end{theorem}
As usual, for a Lipschitz continuous function $\varphi : X \rightarrow Y$, with $(Y,|\cdot|)$ some Banach space,  $[\varphi]_{\mathrm{Lip}}$ denotes the smallest Lipschitz constant, i.e.
\[ [\varphi]_{\mathrm{Lip}} := \inf\{ c > 0 : |\varphi(x) - \varphi(y)| \leq c |x - y| \mbox{ for all $x, y \in X$} \}.\]

\begin{proof}
See \cite[Theorem 6.3.2]{dapratozabczyk1996}. The proof of that theorem requires~\eqref{eq:invmeasure_condition} to hold for the Yosida approximants but it is easy to see that when $\langle A x, x \rangle \leq \lambda |x|^2$, then for arbitrary $\varepsilon > 0$, $n$ large enough,
\[ \langle A_n x, x \rangle \leq (\lambda + \varepsilon) |x|^2, \]
where $A_n$ denote the Yosida approximants of $A$.
\end{proof}

Combining the above theorem with the main result of this paper, we obtain the following.

\begin{corollary}
Consider in $\R^n$ the stochastic delay differential equation
\begin{equation} \label{eq:sdde} d x(t) = [B x(t) + C x(t-1) + f(x(t)) ] \ d t + g(x(t)) \ d W(t),\end{equation}
with $B, C \in \R^{n \times n}$, $f: \R^n \rightarrow \R^n$ and $g: \R^n \rightarrow \R^{n \times k}$
and such that $B - \lambda$ is dissipative, $f$ and $g$ are globally Lipschitz (where we use the Hilbert-Schmidt norm on $\R^{n \times k}$), and where $W$ is a $k$-dimensional Brownian motion.
Suppose $\omega > 0$ is such that
\[ 2 (\lambda + ||C|| e^{-\lambda} ) + 2 [f]_{\mathrm{Lip}} + [g]_{\mathrm{Lip}}^2 < - \omega.\]

Then there exists exactly one invariant measure on the infinite dimensional state space $\R^n \times L^2([-1,0];\R^n)$ for~\eqref{eq:sdde}.
\end{corollary}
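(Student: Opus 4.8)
The plan is to recast~\eqref{eq:sdde} as an abstract stochastic evolution equation of the form~\eqref{eq:stochasticevolutioneq} on the Hilbert space $\mathcal E^2 = \reals^n \times L^2([-1,0];\reals^n)$ and then apply Theorem~\ref{thm:invariantmeasuredissipativecase}. First I would pass to the segment process $X(t) = \bigl(x(t), x_t\bigr)$, $x_t(\sigma) = x(t+\sigma)$, and observe that $X$ solves
\[ dX(t) = \bigl[ A X(t) + \mathcal F(X(t)) \bigr]\, dt + \mathcal G(X(t))\, dW(t), \qquad X(0) = (x, x_0), \]
where $A$ is the delay-semigroup generator~\eqref{eq:generatorabstractdelaysemigroup} built from $B$ and the single operator jump $\Phi f = C f(-1)$ --- an instance of Example~\ref{ex:delaysemigroup} with $Z = X = \reals^n$, so Theorem~\ref{thm:wellposednessabstractdelay} guarantees that $A$ generates a delay semigroup --- and where $\mathcal F$ and $\mathcal G$ act through the first coordinate only, $\mathcal F(c,\phi) = (f(c),0)$ and $\mathcal G(c,\phi) = (g(c),0) \in L_{\mathrm{HS}}(\reals^k;\mathcal E^2)$, with $W$ a $\reals^k$-valued Brownian motion. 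The equivalence of the two solution concepts (the segment process of a solution of~\eqref{eq:sdde} solves the abstract equation and conversely, with matching transition semigroups) is classical for functional differential equations and would be quoted from the literature.

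Next I would change the inner product. For $\Phi f = C f(-1)$ the measure $\zeta$ is a single operator atom of mass $\|C\|$ at $-1$, so $|\zeta|(0)-|\zeta|(-1) = \|C\|$ and $\int_{-1}^0 \e^{2\lambda r}\, d|\zeta|(r) = \|C\|\e^{-2\lambda}$, whence $\bigl((|\zeta|(0)-|\zeta|(-1))\int_{-1}^0 \e^{2\lambda r}\,d|\zeta|(r)\bigr)^{1/2} = \|C\|\e^{-\lambda}$. The hypothesis $2(\lambda + \|C\|\e^{-\lambda}) + 2[f]_{\mathrm{Lip}} + [g]_{\mathrm{Lip}}^2 < -\omega < 0$ forces $\lambda + \|C\|\e^{-\lambda} < 0$, so I may fix $\mu < 0$ with $\mu > \lambda + \|C\|\e^{-\lambda}$ and, by taking $\mu$ close enough to $\lambda + \|C\|\e^{-\lambda}$, also arrange $2\mu + 2[f]_{\mathrm{Lip}} + [g]_{\mathrm{Lip}}^2 \le -\omega$ (this is possible precisely because the hypothesis is the strict inequality). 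For such $\mu$, Corollary~\ref{cor:dissipativitydelay}(i), which rests on Theorem~\ref{thm:dissipativityboundedvariation}, supplies an equivalent inner product $(\cdot,\cdot)$ on $\mathcal E^2$ of the shape $\bigl((c,\phi),(d,\psi)\bigr) = \langle c,d\rangle + \int_{-1}^0 \tau(s)\langle\phi(s),\psi(s)\rangle\, ds$, with $\tau$ bounded above and below away from $0$, such that $A - \mu I$ is dissipative with respect to $(\cdot,\cdot)$. Crucially, $(\cdot,\cdot)$ leaves the $\reals^n$-coordinate untouched, so $\mathcal F$ and $\mathcal G$ stay Lipschitz on $(\mathcal E^2,(\cdot,\cdot))$ with constants $[f]_{\mathrm{Lip}}$ and $[g]_{\mathrm{Lip}}$, and $\|\mathcal G(X)-\mathcal G(Y)\|_{L_{\mathrm{HS}}(\reals^k;\mathcal E^2)} = \|g(c)-g(d)\|_{\mathrm{HS}}$.

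It then remains to verify the contractivity hypothesis of Theorem~\ref{thm:invariantmeasuredissipativecase} in the Hilbert space $(\mathcal E^2,(\cdot,\cdot))$. For $Z = X - Y = (c-d,\phi-\psi)$ with $Z \in \D(A)$, dissipativity of $A - \mu I$ gives $(AZ,Z) \le \mu|Z|^2$; the Lipschitz bounds give $(\mathcal F(X)-\mathcal F(Y),Z) = \langle f(c)-f(d),c-d\rangle \le [f]_{\mathrm{Lip}}\|c-d\|^2$ and $\|\mathcal G(X)-\mathcal G(Y)\|^2 \le [g]_{\mathrm{Lip}}^2\|c-d\|^2$; and since $\tau \ge 0$ we have $\|c-d\|^2 \le |Z|^2$. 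Combining,
\[ 2\bigl(AZ + \mathcal F(X)-\mathcal F(Y),\, Z\bigr) + \|\mathcal G(X)-\mathcal G(Y)\|^2 \le \bigl(2\mu + 2[f]_{\mathrm{Lip}} + [g]_{\mathrm{Lip}}^2\bigr)|Z|^2 \le -\omega|Z|^2, \]
which is exactly the required bound (the restriction $Z \in \D(A)$ being removed via the Yosida-approximant argument already used in the proof of Theorem~\ref{thm:invariantmeasuredissipativecase}). Theorem~\ref{thm:invariantmeasuredissipativecase} then yields a unique invariant measure for the abstract equation, and since the original and the new norm on $\reals^n \times L^2([-1,0];\reals^n)$ generate the same Borel $\sigma$-algebra and the same transition semigroup, this is the asserted unique invariant measure for~\eqref{eq:sdde}. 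The step I expect to be the real obstacle is not any of these estimates but the very first one: making the passage from the finite-dimensional delay SDE~\eqref{eq:sdde} to the abstract stochastic evolution equation on $\mathcal E^2$ fully rigorous --- identifying the cylindrical driving noise, giving meaning to the stochastic convolution against the delay semigroup, and reconciling the solution notions --- which relies on the theory of stochastic functional differential equations rather than on anything established in this paper.
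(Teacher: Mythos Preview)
Your proposal is correct and follows exactly the route the paper takes: apply Corollary~\ref{cor:dissipativitydelay}(i) with $\mu = \lambda + \|C\|\e^{-\lambda} + \varepsilon$ for small $\varepsilon > 0$ to obtain dissipativity of $A-\mu I$ under an equivalent inner product, then invoke Theorem~\ref{thm:invariantmeasuredissipativecase}. Your write-up is considerably more explicit than the paper's two-line proof---in particular, your observation that the new inner product leaves the $\reals^n$-coordinate unchanged (so the Lipschitz constants of $\mathcal F$ and $\mathcal G$ are preserved and $\|c-d\|^2 \le |Z|^2$) fills in the step the paper silently assumes.
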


\begin{proof}
This is an immediate result of applying Corollary~\ref{cor:dissipativitydelay}, (i), with $\mu = \lambda + ||C|| e^{-\lambda} + \varepsilon$ for some sufficiently small $\varepsilon > 0$ (which establishes that the corresponding delay semigroup $A$ has the property that $A - \mu$ is dissipative), in conjunction with Theorem~\ref{thm:invariantmeasuredissipativecase}.
\end{proof}

\begin{remark}
The result of this section also applies to systems with L\'evy noise, see \cite[Theorem 16.5]{peszatzabczyk} for the underlying result in this case that is analogous to Theorem~\ref{thm:invariantmeasuredissipativecase}.
\end{remark}

\subsubsection{Stability}
It is also possible to obtain the following stability result using the dissipativity property of a delay semigroup (see \cite{bierkensphdthesis}, Section 6.6.3, and \cite{bierkenslyapunovunbounded}).

\begin{theorem}
Consider the one-dimensional stochastic delay differential equations
\[ d x(t) = [b x(t) + c x(t-1)  ] \ d t + \sigma x(t) \ d W(t),\]
with $b$, $c$ and $\sigma > 0$ such that
$b < \half \sigma^2$ and
\[ |c| < e^{-3/2 \sigma^2} (\half \sigma^2 - b).\]
Then the solution $(x(t))_{t \geq 0}$ is exponentially stable, almost surely.
\end{theorem}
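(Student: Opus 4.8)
The plan is to realise the one-dimensional stochastic delay equation as a stochastic evolution equation on the Hilbert state space $\mathcal{E}^2 = \reals \times L^2([-1,0];\reals)$, equip $\mathcal{E}^2$ with the equivalent inner product produced by Theorem~\ref{thm:dissipativityboundedvariation}, and then verify a dissipativity-type estimate of the form required by an almost-sure exponential stability criterion (the one quoted from \cite{bierkensphdthesis}, Section~6.6.3, and \cite{bierkenslyapunovunbounded}). Concretely, write the delay generator as $A = \begin{bmatrix} b & \Phi \\ 0 & \frac{d}{d\sigma} \end{bmatrix}$ with $\Phi f = c\, f(-1)$, i.e. $\zeta$ a single jump of size $c$ at $-1$, so that $|\zeta|(0) - |\zeta|(-1) = |c|$ and $\int_{-1}^0 \e^{2\mu r}\, d|\zeta|(r) = |c|\,\e^{-2\mu}$. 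The noise enters only through the first coordinate, via the bounded multiplication operator $G\begin{pmatrix} x \\ f \end{pmatrix} = \begin{pmatrix} \sigma x \\ 0 \end{pmatrix}$, whose Hilbert--Schmidt norm squared in the equivalent inner product is $\gamma \sigma^2 x^2$ with $\gamma = \half\sigma^2 - b$ (the optimal choice $\gamma = \mu - \lambda$ from the proof of Theorem~\ref{thm:dissipativityboundedvariation}, with $\lambda = b$ and $\mu = 0$).

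Next I would choose the dissipativity parameter. Taking $\lambda = b < \half\sigma^2$ and aiming for $A$ itself (after inner-product change) to be strictly dissipative with a margin, I would apply Corollary~\ref{cor:dissipativitydelay}(ii), or rather the sharp form of Theorem~\ref{thm:dissipativityboundedvariation} with a small positive $\mu$: the hypothesis $|c| < \e^{-3/2\,\sigma^2}(\half\sigma^2 - b)$ should be exactly what makes $(\mu - \lambda)^2 > |c|\,\e^{-2\mu}|\zeta|$-type inequality hold with room to spare, where the exponent $-3/2\,\sigma^2$ reflects both the factor $\e^{-2\mu}$ evaluated near the relevant $\mu$ and the Itô-correction term $\half\sigma^2$ appearing in the stability exponent. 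The key computation is to combine the dissipativity estimate $( A \xi, \xi) \leq -\delta (\xi,\xi)$ for some $\delta > 0$ (in the equivalent inner product) with the noise contribution $\|G\xi\|^2_{L_{\mathrm{HS}}} \leq \sigma^2 \gamma\, \xi(0)^2 \leq \sigma^2 (\xi,\xi)$, producing a bound of the form $2( A\xi,\xi) + \|G\xi\|^2 \leq -(2\delta - \sigma^2)(\xi,\xi)$, and checking that $2\delta - \sigma^2 > 0$ under the stated hypothesis; this negative-drift bound is the hypothesis of the quoted almost-sure exponential stability theorem.

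The main obstacle I expect is bookkeeping the constants so that the clean inequality $|c| < \e^{-3/2\,\sigma^2}(\half\sigma^2 - b)$ emerges, rather than a messier sufficient condition. One has freedom in the choice of $\mu$ (equivalently $\gamma$) and of how much of the margin $\delta$ to spend, and the exponential weight $\tau(s) = \e^{-2\mu s}[\gamma - \cdots]$ must be verified to stay uniformly positive and bounded so that the inner product really is equivalent; matching the optimal trade-off is what pins down the exponent $3/2$. A secondary technical point is transferring the generator-level dissipativity estimate to the Yosida approximants (as in the proof of Theorem~\ref{thm:invariantmeasuredissipativecase}), so that the almost-sure stability criterion from \cite{bierkenslyapunovunbounded}, which is typically stated for approximated or finite-dimensional Galerkin systems, applies; since the state space here is genuinely infinite-dimensional this passage to the limit, together with the fact that the multiplicative noise operator $G$ is bounded and acts only on the boundary coordinate, needs to be handled with the care outlined in \cite{bierkensphdthesis}, Section~6.6.3.
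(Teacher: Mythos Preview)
Your overall architecture --- lift to $\mathcal E^2$, renorm with the weighted inner product of Theorem~\ref{thm:dissipativityboundedvariation}, then feed into a stochastic stability criterion --- matches the route the paper points to (the paper itself gives no proof, only the references \cite{bierkensphdthesis}, \cite{bierkenslyapunovunbounded}). But the central step of your sketch is the wrong criterion, and this is not a bookkeeping issue.

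You propose to verify $2(A\xi,\xi)+\|G\xi\|^2_{L_{\mathrm{HS}}}\le -\omega(\xi,\xi)$ and then conclude almost-sure exponential stability. That inequality is the \emph{mean-square} stability condition (it is exactly the hypothesis of Theorem~\ref{thm:invariantmeasuredissipativecase}); it cannot yield the stated result, because in the scalar delay-free case it reads $2b+\sigma^2<0$, whereas the theorem only assumes $b<\half\sigma^2$. The whole point of the a.s.\ statement is that multiplicative noise \emph{helps} via the It\^o correction: applying It\^o to $\log (\xi,\xi)$ produces a drift of the form
\[
\frac{2(A\xi,\xi)+\|G\xi\|^2}{(\xi,\xi)}\;-\;\frac{2\,|(G\xi,\xi)|^2}{(\xi,\xi)^2},
\]
and it is the \emph{negative} quadratic-variation term on the right that converts the threshold from $-\half\sigma^2$ to $+\half\sigma^2$. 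Your sketch never invokes this term.

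The difficulty, and the reason the cited paper is about \emph{degenerate} stochastic evolutions, is that here $G$ acts only on the first coordinate, so $(G\xi,\xi)=\sigma x(0)^2$ while $(\xi,\xi)=x(0)^2+\int_{-1}^0\tau(s)|f(s)|^2\,ds$; hence $|(G\xi,\xi)|^2/(\xi,\xi)^2$ can be made arbitrarily small by taking $f$ large, and the standard Mao-type a.s.\ criterion fails outright. Overcoming this degeneracy is the actual content of \cite{bierkenslyapunovunbounded} and is what produces the specific factor $\e^{-3\sigma^2/2}$; it is not recoverable from the inequality you wrote down. Two smaller slips compound this: in the paper's inner product the $X$-component carries weight $1$, not $\gamma$, so $\|G\xi\|^2=\sigma^2 x(0)^2$; and your parameter choices are inconsistent (you set $\mu=0$, $\lambda=b$, hence $\gamma=\mu-\lambda=-b$, yet also claim $\gamma=\half\sigma^2-b$).
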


This may be interpreted as follows: if $b < \half \sigma^2$, then the equation without the delay term is almost surely exponentially stable. The second condition then gives a range for $|c|$ so that the system perturbed by an additional delay term remains stable.

\subsection*{Acknowledgements}
O. van Gaans acknowledges the support by a ‘VIDI subsidie’
(639.032.510) of the Netherlands Organisation for Scientific Research (NWO).

We wish to thank the anonymous referee for his valuable comments which have significantly improved the above exposition.

% \bibliographystyle{plain}
% \bibliography{../bibliography/bibliography}

\end{document}